\newcommand{\calF}{{\mathcal F}}
\newcommand{\calT}{{\mathcal T}}
\def\Re{\mathbb{R}}
\def\argmin{\mathop{\text{\rm arg\,min}}}
\def\notes#1{\marginpar{\tiny #1}\typeout{Notes!
Notes!
Notes!
}}
\renewcommand{\notes}[1]{\typeout{notes!}}
\def\Re{\field{R}}
\def\Expect{{\mathbb E}}
\newtheorem{definition}{Definition}
\newtheorem{lemma}{Lemma}
\newtheorem{remark}{Remark}
\newtheorem{proposition}{Proposition}
\def\beq{\begin{eqnarray}} 
\def\bc{\begin{center}} 
\def\be{\begin{enumerate}}
\def\bi{\begin{itemize}} 
\def\bs{\begin{small}}
\def\bS{\begin{slide}}
\def\ec{\end{center}} 
\def\ee{\end{enumerate}}
\def\ei{\end{itemize}}
\def\es{\end{small}}
\def\eS{\end{slide}}
\def\eeq{\end{eqnarray}}
\newcommand{\newP}[1]{\noindent{\bf #1:}}
\def\Re{\mathbb{R}}
\def\argmin{\mathop{\text{\rm arg\,min}}}
\renewcommand{\Re}{\mathbb{R}}
\newcounter{rmnum}
\newcounter{anum}
\newcommand{\calS}{\mathcal{S}}
\newcommand{\calA}{\mathcal{A}}
\newcommand{\ALG}{{OTPF}}
\title{\LARGE \bf
	Optimal Transport Particle Filters 
}
\author{Mohammad Al-Jarrah$^\star$, Bamdad Hosseini$^\dagger$, Amirhossein Taghvaei$^\star$
    {\thanks{$^\star$Department of Aeronautics \& Astronautics, University of Washington, Seattle; {\tt\small mohd9485@uw.edu,amirtag@uw.edu}.}}
    {\thanks{$^\dagger$Department of Applied Mathematics, University of Washington, Seattle
        {\tt\small bamdadh@uw.edu}.}}
}
\begin{document}

      \maketitle
	 \thispagestyle{empty}
	 \pagestyle{empty}


	 \begin{abstract}
 This paper is concerned with the theoretical and computational development of a new class of nonlinear filtering algorithms called the optimal transport particle filters (OTPF). The algorithm is based on a recently introduced variational formulation of the Bayes' rule, which aims to find the Brenier optimal transport map between the prior and the posterior distributions as the solution to a stochastic optimization problem. On the theoretical side, the existing methods for the error analysis of particle filters and stability results for optimal transport map estimation are combined to obtain uniform error bounds for the filter's performance in terms of the optimization gap in solving the variational problem. The error analysis reveals a bias-variance trade-off that can ultimately be used to understand if/when the curse of dimensionality can be avoided in these filters. On the computational side, the proposed algorithm is evaluated on a nonlinear filtering example in comparison with the ensemble Kalman filter (EnKF) and the sequential importance resampling (SIR) particle filter.

	\end{abstract}

    \section{Introduction}
    Optimal transportation (OT) theory has gained significant interest recently because it {provides  natural geometrical
and mathematical tools for analysis and manipulation of probability distributions}~\cite{ambrosio2005gradient,villani2009optimal,santambrogio2015optimal}. 
In particular, two geometric
notions are of key importance: (i) a metric to measure the similarity/discrepancy between probability distributions, i.e., the Wasserstein metric, and (ii) a map to transport one distribution to the other, i.e., the OT {or Monge} map. In contrast to their information-theoretic counterparts {(such as the Kullback-Liebler (KL) divergence)},
these {OT metrics} respect the geometry of the data and are {often more} robust against perturbations {and errors}. Due to {these} unique features, {OT metrics and maps} have been successfully employed in a variety of applications, including {generative modeling and sampling}~\cite{arjovsky2017wasserstein,tolstikhin2017wasserstein}, domain adaptation~\cite{courty2015optimal}, and image processing~\cite{kolouri2017optimal,dominitz2010texture,rabin2011wasserstein,ferradans2014regularized,su2015optimal}.  

Given the significance of OT theory, there has been a growing line of research to apply {OT tools for nonlinear filtering and Bayesian inference}~\cite{el2012bayesian,reich13,reich2019data,taghvaei2021optimal}. The central idea {here}
is to view  
the filtering/Bayesian {update} step as the problem of transporting the prior distribution {(of the current state)} to the posterior distribution {(of the future state)}. This perspective {has} led to the development of new algorithms for Bayesian inference, namely learning triangular transport maps with polynomial, {radial basis, and neural net parameterizations}  to sample from {posteriors~\cite{el2012bayesian,marzouk2016introduction, kovachki2020conditional},}   
ensemble transform particle filters~\cite{reich2013nonparametric}, and OT interpretations of the feedback particle filter algorithm~\cite{yang2016,AmirACC2016,taghvaei2020optimal,taghvaei2021optimal}.

This paper builds on the authors' recent work~\cite{taghvaei2022optimal} where an OT-based variational formulation of the Bayes' law was introduced to learn the OT map from the prior to the posterior distribution for any value of the observation signal. 
In this formulation, the conditional distribution $P_{X|Y}$, of a hidden random variable $X$ given the observation $Y$,  is identified as
$P_{X|Y}(\cdot|y) =  \nabla_x \bar f(\cdot;y) \# P_X(\cdot), \: \forall y$, i.e.,
the push-forward of the prior distribution $P_X$ with respect to a  map of the form $\nabla \bar f$
where  $\bar f$ is a real-valued function that solves the optimization problem:
\begin{equation}
\begin{aligned}
\bar f &=  \argmin_{f\in \text{CVX}_x} \,\, \mathbb E[f( \bar X;Y)+f^\ast(X;Y)].
\end{aligned}
 \label{eq:var-form}
 \end{equation}
 Here $\bar X$ is an independent copy of $X$ and  $\text{CVX}_x$ denotes the set of functions $f(x;y)$ that are convex with respect to the $x$ argument for any fixed $y$, 
and $f^\ast$ is the convex conjugate of $f$ with respect  to the $x$ argument. 

The above variational formulation enjoys three key features that distinguish it from prior works: (i) it is simulation-based in the sense that it is possible to approximate the objective function in terms of samples
from the joint distribution $P_{XY}$ and does not require an explicit formula for the likelihood; (ii) The variational formulation enables new approximation methods for computing the posterior distribution by
choosing different subsets/parameterizations of the set $\text{CVX}_x$; (iii) The problem \eqref{eq:var-form} is stochastic and can be solved efficiently using recent machine learning techniques, for example, $f$ can be parameterized as a deep neural network and trained using stochastic gradient descent.
Problem \eqref{eq:var-form} can be obtained as the dual form of a block-triangular Monge problem 
between the independent coupling $P_X\otimes P_Y$ and the joint distribution~$P_{XY}$.
Similar variational formulations arise in block-triangular transport of distributions in the context of  conditional generative models; for example ~\cite{spantini2019coupling,kovachki2020conditional,ray2022efficacy,shi2022conditional,siahkoohi2021preconditioned}.

The objective of the current paper is to use the formulation~\eqref{eq:var-form} to develop a new nonlinear filtering algorithm, called {\it optimal transport particle filter} (OTPF),
and provide preliminary {theoretical analysis and numerical validation}  of the algorithm. The proposed
algorithm can be viewed as a nonlinear and non-Gaussian generalization of the ensemble Kalman filter algorithm (EnKF)~\cite{evensen2006,calvello2022ensemble} and the discrete-time counterpart of the feedback particle filter (FPF) algorithm~\cite{taoyang_TAC12,yang2016}
(OTPF~solves the gain function approximation and the numerical time discretization problems in the FPF altogether by solving the proposed variational problem~\eqref{eq:var-form}).

The theoretical analysis of the paper is concerned with the error analysis of the proposed algorithm.
In particular, we study how errors solving  problem \eqref{eq:var-form} at each time step affect the overall
  performance of the filtering algorithm. To do so, we adapt the existing methods for error analysis of particle filters (PF) to obtain a uniform bound on the filtering error in terms of the  approximation  error of the OT map~\cite{del2001stability,van2009uniform,cappe2009inference,delmoralbook}. These results are based on a strong notion of uniform geometric filter stability~\cite{atar1997exponential}, which is common in the analysis of PF. Next, we combine this with
stability results for the estimation of OT maps~\cite{hutter2019minimax} which relates the approximation error of the
map to the optimization gap of~\eqref{eq:var-form} (see Lemma~\ref{lem:opt-gap}) .
The error analysis is carried out for the mean-field limit of the algorithm and a variant of the particle system that involves an additional resampling step which makes the particles independent of each other and significantly simplifies
the  analysis.

 The numerical experiments  qualitatively and quantitatively evaluate the performance of the OTPF in comparison with the EnKF algorithm~\cite{evensen2006,calvello2022ensemble} and the sequential importance resampling (SIR) PF~\cite{doucet09}. In particular, we consider a linear stable   dynamical system with three different observation functions: linear, quadratic, and cubic. The numerical results illustrate the versatile nature of the OTPF 
compared to the other two methods.

The rest of the paper is organized as follows: Section~\ref{sec:filtering} reviews the filtering problem and
  equations, and introduces the notion of filter stability;
  Section~\ref{sec:OTPF} outlines the OTPF algorithm in detail; 
  Section~\ref{sec:error-analysis} presents the error analysis; and
  Section~\ref{sec:numerics} contains the numerical experiments.  

	\section{Problem Formulation}\label{sec:filtering}
        \subsection{Filtering problem}
Consider a discrete-time stochastic dynamic system given by the update equations
\begin{subequations}\label{eq:model}
\begin{align}\label{eq:model-dyn}
    X_{t} &\sim a(\cdot|X_{t-1}) , \quad X_0 \sim \pi_0 \\\label{eq:model-obs}
    Y_t &\sim h(\cdot|X_t)
\end{align}
for $t=1,2,\ldots $ where $X_t\in \mathbb{R}^n$ is the state of the system, $Y_t \in \mathbb{R}^m$ is the observation, $\pi_0$ is the probability distribution for the initial state $X_0$, $a(x'|x)$ is the probability kernel for the transition from the state $x$ to the state $x'$, and $h(y|x)$ is the  likelihood distribution  
of an observation $y$ given a state $x$.
We assume that the update equation~\eqref{eq:model-dyn} is realized with the stochastic map
\begin{equation}\label{eq:dyn}
    X_{t} = \bar  a(X_{t-1},V_t)
\end{equation}
\end{subequations}
where $\{V_t\}_{t=1}^\infty$ is an i.i.d sequence and $\bar a(x,v)$ is Lipschitz in $x$ for all $v$. Throughout the paper, we assume that all probability measures admit a density and use the same notation to refer to
the distribution or the corresponding measure. If needed, the two notions will be distinguished depending on the context. 

The filtering problem is to infer the conditional distribution of the state $X_t$ given the history of the observations $\{Y_1,Y_1,\dots,Y_{t}\}$, that is, the distribution
\[
\pi_t := \mathbb{P}(X_t\in \cdot |Y_1,\dots,Y_{t} ),\quad \text{for}\quad t=1,2,\ldots,
\]
often referred to as the {\it posterior} distribution. 
\subsection{Recursive update for the filter}
The posterior distribution $\pi_t$ admits a recursive update equation that is essential for the design of filtering algorithms. To present this recursive update, let us introduce the following operators:
\begin{subequations}\label{eq:filter}
\begin{align}
    \text{(propagation)}\quad \pi \mapsto \mathcal A \pi &:= \int_{\mathbb R^n} a(\cdot|x) \pi(x)d x\\
  \text{(conditioning)}\quad \pi \mapsto \mathcal B_y \pi &:= \frac{h(y|\cdot)\pi(\cdot)}{\int_{\mathbb R^n} h(y|x) \pi(x)d x} \label{eq:Bayesian}   
\end{align}
The first operator represents the update for the distribution of the state according to the dynamic model~\eqref{eq:model-dyn}. The second operator represents Bayes' rule that
 carries out the conditioning  according to 
the observation model~\eqref{eq:model-obs}.  
In terms of these two operators, the update law
for the posterior is given by~(e.g. see \cite{cappe2009inference}):
\begin{equation}
 \pi_{t} = \calT_t \pi_{t-1}=\mathcal{B}_{Y_t}\mathcal{A} \pi_{t-1}.
\end{equation}
\end{subequations}
where we introduced $\calT_t:=\mathcal{B}_{Y_t}\mathcal{A}$. With slight abuse of notation, we
further
define the  transition operator as
\begin{align*}
  \mathcal  T_{t,s}:= \calT_{t}\circ \dots \circ \calT_{s+1},\quad \forall \quad t> s\geq 0.
\end{align*}
We then have $\pi_t=\mathcal T_{t,s}\pi_s$ for all $t>s\geq 0$. Note that the transition operator $\calT_{t,s}$
is stochastic in nature as  it depends on the realization of the observation signal $\{Y_{s+1},\ldots,Y_t\}$.
We suppress this dependence to simplify the presentation. 

\subsection{Filter stability}
We use the following metric on (possibly random) probability measures $\mu,\nu$:
\begin{equation}\label{eq:metric}
    d(\mu,\nu) := \sup_{g \in \mathcal G} \sqrt{\mathbb{E}\left|\int g d \mu - \int g d \nu\right|^2}
\end{equation}
where  the expectation is over the possible randomness of the probability measures $\mu$ and $\nu$, and $\mathcal G:=\{g:\Re^n \to \Re;\,|g(x)|\leq 1, |g(x)-g(x')|\leq \|x-x'\|,\quad \forall x,x'\}$ is the space of functions that are uniformly bounded by one and uniformly Lipschitz with constant smaller than one (this metric is also known as the dual bounded-Lipschitz distance). 
We use this metric to introduce a notion of uniform geometrical stability for the filter. 
\begin{definition}[Uniformly geometrically  stable filter]
  The filter update~\eqref{eq:filter} 
  is uniformly geometrically stable 
if $\exists \lambda\in (0,1)$ and positive constant $C>0$ such that for all $\mu,\nu$ and $ t>s\geq 0$ it holds that
\begin{equation}\label{eq:stability}
d(\mathcal T_{t,s}\mu,\mathcal T_{t,s}\nu) \leq C(1-\lambda)^{t-s}d(\mu,\nu).
\end{equation}
\label{def:stability}
\end{definition}
\medskip

\begin{remark}\label{rem:stability}
  The uniform geometric stability property~\eqref{eq:stability} is also used in the error analysis of PFs in~\cite{del2001stability,delmoralbook}. It can be verified if the dynamic transition kernel satisfies a minorization condition,
    i.e., there exists a probability measure $\rho$ 
    and a constant $\epsilon >0$ such that  $a(x|x')\geq \epsilon\rho(x)$.
  The minorization is a mixing condition that ensures geometric ergodicity of the Markov process $X_t$~\cite{meyn2012markov}. We acknowledge that this condition is  a strong and can be verified for a restricted class of systems, e.g., $X_t$ should belong to a compact set. A complete characterization of  systems with uniform geometric stable filters is an open and challenging problem in the field. More insight is available for the weaker notion of asymptotic stability of the filter, i.e., $\lim_{t\to\infty}d(\mathcal T_{t,s}\mu,\mathcal T_{t,s}\nu)=0$, which holds when the system is  ``detectable''
  in a sense that is suitable for nonlinear stochastic dynamical systems~\cite{van2010nonlinear,chigansky2009intrinsic,van2009observability,kim2022duality}. This characterization of systems with asymptotic filter stability 
  is in agreement with the existing results for the stability of the Kalman filter, which holds when the linear system is detectable in the classical sense~\cite{ocone1996}.  A complete survey of existing  filter stability results can be found in~\cite{crisan2011oxford}.

\end{remark}

The following Lemma  is useful for our  error analysis. 
\begin{lemma}\label{lem:T-d}
Let $\pi$ be a (random) distribution and $T$ and $S$ two (random) measurable maps. Then,
\begin{align*}
    d(T\#\pi;S\#\pi) \leq \mathbb E \left[\|T-S \|^2_{L^2(\pi)}\right]^{\frac{1}{2}},
\end{align*}
where the expectation is over the possible randomness of the distribution $\pi$ as well as the  maps $T, S$.
\end{lemma}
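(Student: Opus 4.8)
The plan is to unravel the definition of the metric $d$ in~\eqref{eq:metric}, fix an arbitrary test function $g\in\mathcal G$, and bound the integral discrepancy pointwise in the sample space before squaring and taking expectations. First I would write, for a fixed realization of $\pi,T,S$ and a fixed $g\in\mathcal G$, the change-of-variables identity
\begin{equation*}
\int g \, d(T\#\pi) - \int g \, d(S\#\pi) = \int \bigl( g(T(x)) - g(S(x)) \bigr) \, \pi(dx).
\end{equation*}
Since every $g\in\mathcal G$ is $1$-Lipschitz, the integrand is bounded in absolute value by $\|T(x)-S(x)\|$, so taking absolute values inside the integral gives
\begin{equation*}
\left| \int g \, d(T\#\pi) - \int g \, d(S\#\pi) \right| \le \int \|T(x)-S(x)\| \, \pi(dx) \le \left( \int \|T(x)-S(x)\|^2 \, \pi(dx) \right)^{1/2} = \|T-S\|_{L^2(\pi)},
\end{equation*}
where the second inequality is Cauchy--Schwarz (equivalently Jensen applied to $\sqrt{\,\cdot\,}$) using that $\pi$ is a probability measure.

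Next I would square this bound and take the expectation over the randomness of $\pi,T,S$, yielding
\begin{equation*}
\mathbb E \left| \int g \, d(T\#\pi) - \int g \, d(S\#\pi) \right|^2 \le \mathbb E \left[ \|T-S\|_{L^2(\pi)}^2 \right].
\end{equation*}
The right-hand side no longer depends on $g$, so taking the supremum over $g\in\mathcal G$ on the left and then the square root gives exactly $d(T\#\pi;S\#\pi) \le \mathbb E[\|T-S\|_{L^2(\pi)}^2]^{1/2}$, which is the claim.

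There is no real obstacle here; the only points requiring a word of care are measurability (so that the expectations and the pushforward integrals are well defined, which we take for granted under the standing assumption that all measures have densities and all maps are measurable) and the order of operations: because the supremum over $\mathcal G$ sits outside the expectation in the definition of $d$ and the final upper bound is independent of $g$, no exchange of $\sup$ and $\mathbb E$ is needed, so the argument is clean.
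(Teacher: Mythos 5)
Your proof is correct and follows exactly the route the paper indicates for this lemma: unwind the definition of the metric $d$, apply the change of variables for the pushforward, use the $1$-Lipschitz property of the test functions $g\in\mathcal G$, and finish with Cauchy--Schwarz and the expectation, taking the supremum last since the bound is uniform in $g$. The paper leaves these steps as a ``straightforward argument,'' and your write-up simply makes that same argument explicit.
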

\begin{proof}
  The proof  follows from a straightforward argument using the definition of the metric~\eqref{eq:metric} 
  and the
  Lipschitz property of the test functions $g$. 
\end{proof}

    \section{Optimal Transport Particle Filters}\label{sec:OTPF}
The construction of OTPFs relies on the variational formulation~\eqref{eq:var-form}. 
Consider the objective functional
\begin{align}\label{eq:obj-func}
J(f;\pi):=\mathbb E[f(\bar X;Y) + f^\star(X;Y)],
\end{align}
where $X \sim \pi$, $Y \sim h(\cdot|X)$, and $\bar X\sim \pi$ is an independent copy of $X$, along with the
optimization problem 
\begin{equation}
    \inf_{f\in \text{CVX}_x}\,J(f,\pi). 
\label{eq:opt-problem}    
\end{equation}
It is shown in~\cite[Prop. 1]{taghvaei2022optimal} that the solution to this  problem provides an OT
characterization of the Bayes operator~\eqref{eq:Bayesian}. The result is reproduced here for completeness
\begin{proposition}\label{prop-1}
Assume $\pi$ admits a density with respect to the Lebesgue measure. Then, the objective function~\eqref{eq:obj-func} has a unique (up to a constant shift) minimizer $\bar f \in \text{CVX}_x$ and
\begin{equation}
     \nabla_x \bar f(\cdot;y) \# \pi = \mathcal B_y \pi ,\quad \text{for a.e.  } y.\label{eq:OT-Bayes}
   \end{equation}
\end{proposition}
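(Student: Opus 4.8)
The plan is to reduce the claim to the classical equivalence between the ``semi-dual'' formulation of quadratic optimal transport and Brenier's theorem, applied separately for each value of the observation $y$.

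First I would disintegrate the objective along $Y$. Since $\bar X$ is an independent copy of $X$, and hence independent of $Y$, while $(X,Y)$ has joint law with conditional $P_{X|Y}(\cdot\,|y)=\mathcal B_y\pi$, writing $P_Y$ for the marginal of $Y$ gives
\[
J(f;\pi)=\int \Big( \int f(x;y)\,\pi(\ud x) + \int f^*(x;y)\,(\mathcal B_y\pi)(\ud x)\Big)\, P_Y(\ud y).
\]
Because $f(\cdot\,;y)$ may be chosen (measurably) and independently for each fixed $y$, minimizing $J(\cdot;\pi)$ over $\text{CVX}_x$ reduces to minimizing, for $P_Y$-a.e.\ $y$, the functional $g\mapsto G_y(g):=\int g\,\ud\pi + \int g^*\,\ud(\mathcal B_y\pi)$ over convex $g\colon\mathbb R^n\to\mathbb R$.

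Second, I would identify $\inf_g G_y(g)$ with a quadratic optimal transport problem between $\mu:=\pi$ and $\nu:=\mathcal B_y\pi$. The Fenchel--Young inequality $g(x)+g^*(x')\ge\langle x,x'\rangle$ gives, for every coupling $\gamma\in\Pi(\mu,\nu)$, the bound $G_y(g)\ge\int\langle x,x'\rangle\,\ud\gamma$, hence $\inf_g G_y(g)\ge\sup_{\gamma\in\Pi(\mu,\nu)}\int\langle x,x'\rangle\,\ud\gamma$. Brenier's theorem, applicable because $\mu=\pi$ is absolutely continuous, produces a convex potential $\phi_y$ with $\nabla\phi_y\#\pi=\mathcal B_y\pi$ and an optimal coupling $\gamma^\star$ concentrated on the graph of $\nabla\phi_y$; taking $g=\phi_y$ and using the equality case of Fenchel--Young along that graph shows $G_y(\phi_y)=\int\langle x,x'\rangle\,\ud\gamma^\star$, so the bound is attained and $\phi_y$ is a minimizer. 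Conversely, any minimizer $g$ must achieve equality in Fenchel--Young $\gamma^\star$-a.e., i.e.\ $\nabla\phi_y(x)\in\partial g(x)$ for $\pi$-a.e.\ $x$; this forces $\nabla g=\nabla\phi_y$ $\pi$-a.e., and, using connectedness of $\operatorname{supp}\pi$, $g=\phi_y$ up to an additive constant. Setting $\bar f(\cdot\,;y):=\phi_y$ and gluing over $y$ then yields the unique (up to a constant shift) minimizer of $J(\cdot;\pi)$ satisfying $\nabla_x\bar f(\cdot\,;y)\#\pi=\mathcal B_y\pi$.

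The step I expect to be the main obstacle is making this disintegration rigorous: one must check that $\pi$ and the posteriors $\mathcal B_y\pi$ have finite second moments for $P_Y$-a.e.\ $y$ (so that Brenier's theorem and Kantorovich duality apply and $G_y$ is not identically $+\infty$), that the potentials $\phi_y$ can be selected so that $(x,y)\mapsto\phi_y(x)$ is jointly measurable and belongs to $\text{CVX}_x$, and that exchanging $\inf_f$ with $\int(\cdot)\,P_Y(\ud y)$ is legitimate; the uniqueness statement in turn rests on uniqueness of the Brenier map, which is the one genuinely delicate ingredient, with the remainder being routine bookkeeping around standard optimal transport duality.
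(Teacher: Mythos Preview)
Your approach is correct and is the natural one: disintegrate over $y$, recognize the inner problem as the semi-dual of quadratic optimal transport between $\pi$ and $\mathcal B_y\pi$, and invoke Brenier's theorem for existence and uniqueness of the convex potential. The paper does not actually prove this proposition; it is quoted verbatim from \cite{taghvaei2022optimal} (``Prop.~1'' there), and the argument in that reference proceeds along exactly the lines you sketch---the formulation \eqref{eq:var-form} is derived as the dual of a block-triangular Monge problem between $P_X\otimes P_Y$ and $P_{XY}$, which after conditioning on $Y=y$ is precisely your $G_y$.

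The caveats you raise are the right ones. The second-moment and measurable-selection issues are genuine but standard. One small point on uniqueness: your argument that $\nabla\phi_y(x)\in\partial g(x)$ $\pi$-a.e.\ forces $g=\phi_y+\text{const}$ implicitly uses connectedness of $\operatorname{supp}\pi$, which is not among the stated hypotheses; without it the constant could differ across components, so ``unique up to a constant shift'' should be read as uniqueness of $\nabla_x\bar f$ (which is all that matters for \eqref{eq:OT-Bayes}) rather than of $\bar f$ itself. This is a cosmetic gap in the statement, not in your reasoning.
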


\subsection{The exact mean-field process}
We use the OT characterization of the conditional distribution to construct a (exact) mean-field process $\bar X_t$ whose distribution $\bar \pi_t$ is exactly equal to the posterior distribution $\pi_t$.
Consider a process $\bar X_t$ with distribution $\bar \pi_t$  defined as   
\begin{subequations}\label{eq:exact-mean-field}
\begin{equation}\label{eq:Xbar}
  \begin{aligned}
    \Bar{X}_{t} & = \nabla_x \Bar{f}_t(\bar a(\Bar{X}_{t-1},\bar V_t);Y_t) ,\quad \bar X_0 \sim \bar \pi_0 \\
     \bar f_t &= \argmin_{f\in \text{CVX}_x}\,J(f,\mathcal A  \bar \pi_{t-1}),
  \end{aligned}
\end{equation}
 where $\bar V_t$ is an independent copy of $V_t$ in the dynamic model~\eqref{eq:dyn}.
It is then straightforward to verify that
\begin{equation}
\begin{aligned}
  \bar \pi_{t} &=  \nabla_x \bar f_t(\cdot;Y_t) \# \mathcal A  \bar \pi_{t-1}
  = \mathcal B_y \mathcal A \bar \pi_{t-1},
\end{aligned}
\label{eq:exact-mean-field-dist}
\end{equation}
\end{subequations}
where the second identitiy is a consequence of Proposition~\ref{prop-1}. It then follows that
whenever $\bar \pi_0 = \pi_0$ then $\bar \pi_t = \pi_t$. As such, the mean-field process $\bar X_t$ is called exact.
The OTPF is obtained by approximating the exact mean-field process $\bar X_t$ in two steps, as described next.

\subsection{The approximate mean-field process}
The first approximation step consists of restricting the feasible set of the optimization problem~\eqref{eq:opt-problem} to a parameterized class of convex functions $\mathcal F \subset \text{CVX}_x $.
The resulting approximated distribution is denoted by $\pi_t^\calF$ which follows the update rule:
\begin{subequations}\label{eq:approx-mean-field}
\begin{equation}
\begin{aligned}
\pi_{t}^\calF &=  \nabla_x f_t^\calF(\cdot,Y_t)\#\mathcal A\pi_{t-1}^\calF,\quad \pi^\calF_0=\pi_0
\\
 f^\calF_t &= \argmin_{f\in \calF}\,J(f, \mathcal A\pi_{t-1}^\calF)
\end{aligned}
\label{eq:approx-mean-field-dist}
\end{equation}
This update defines the approximate mean-field process 
\begin{align}\label{eq:XF}
    X^\calF_{t} = \nabla_x f^\calF_t(\bar a(X^\calF_{t-1};Y_t),\bar V_t),\quad X^\calF_0 \sim \bar \pi_0. 
\end{align}
\end{subequations}
The approximation error between $\pi^\calF_t$ and $\bar \pi_t$, due to the parameterization of the function $f_t$
is studied in section~\ref{sec:approx-error}. 

\subsection{The finite particle system}

The second approximation step is to replace the mean-field process with an empirical distribution of a collection of
particles $\{X^1_t,\ldots,X^N_t\}$, i.e.,  $\pi^\calF_t \approx \frac{1}{N}\sum_{i=1}^N \delta_{X_t^i}.$
The finite-$N$ discretization can be achieved through two different approaches leading to two different systems of particles. The first system (the particle system with resampling) is more amenable to  error analysis, while the second system
the interacting particle system) is more practical.

\newP{(C.I) the particle system with resampling} Define the sampling operator 
\begin{equation}
    \pi \mapsto \calS^N \pi := \frac{1}{N}\sum_{i=1}^N \delta_{X^i}\quad X^i \overset{\text{i.i.d.}}{\sim} \pi, 
\end{equation}
and approximate the mean-field distribution $\pi_t^\calF$ by introducing the sampling operator $\calS^N$
within the update equations:
\begin{subequations}\label{eq:finite-N}
\begin{equation}\label{eq:particles-dist}
\begin{aligned}
    \tilde \pi^{(\calF,N)}_{t} &=  \nabla_x \tilde f_t^{(\calF,N)}(\cdot,Y_t)\# \calS^N \mathcal A \tilde \pi^{(\calF,N)}_{t-1},\quad\pi^{(\calF,N)}_{0}=\pi_0 \\
     \tilde f^{(\calF,N)}_{t} &= \argmin_{f\in \mathcal{F}}\, J(f,\calS^N\calA\tilde \pi^{(\calF,N)}_{t-1}), 
\end{aligned}
\end{equation}
The presence of the sampling operator ensures that the distribution $\tilde \pi^{(\calF,N)}_{t}$ is an empirical distribution formed by a collection of particles, i.e. 
$\tilde \pi^{(\calF,N)}_{t}= \frac{1}{N}\sum_{i=1}^N \delta_{\tilde X^i_t}.$
Equation \eqref{eq:particles-dist} further identifies an update law for  the particles:
\begin{equation}\label{eq:particles-resampled}
\begin{aligned}
 \tilde  X_{t}^i &= \nabla_x \tilde f_t^{(\calF,N)}(\bar a(\tilde X_{t-1}^{\sigma_i},V^i_t);Y_t)
  \end{aligned}
\end{equation}
\end{subequations}
where $\sigma_i \sim \text{Unif}\{1,2,\ldots,N\}$ and $\{V^i_t\}_{i=1}^N$ are independent copies of $V_t$. The sampling process is similar to the resampling stage in PFs, with the difference being that the weights are uniform in this case. The resampling step makes the particles independent of each other, which significantly  simplifies the error analysis, as seen in Section~\ref{sec:finite-N-analysis}. 

\newP{ (C.II) the interacting particle system} The second approach to constructing the finite-$N$ particle system is to discretize the update equation~\eqref{eq:XF} for the mean-field process $X_t^\calF$ according to
\begin{equation}\label{eq:particles-interacting}
\begin{aligned}
  X_{t}^i &= \nabla_x f_t^{(\calF,N)}(\bar a(X_t^i,V^i_t);Y_t)\\
  f_t^{(\calF,N)} &=\argmin_{f\in \mathcal{F}}\, J(f,\frac{1}{N}\sum_{i=1}^N\delta_{\bar a(X^i_t,V^i_t)}).
  \end{aligned}
\end{equation}
The empirical distribution  $\pi^{(\calF,N)}_t:=\frac{1}{N}\sum_{i=1}^N \delta_{X^i_t}$ does not follow an update-law  similar to the update law for $\tilde \pi_t^{(\calF, N)}$ due to the nature of the operator $\mathcal A$, which smooths
out empirical distributions. Instead, the update for the interacting particle system can be expressed as
\begin{equation*}
  \pi^{(\calF,N)}_{t} =   \nabla f^{(\calF,N)}\# \calA^N \pi^{(\calF,N)}_{t-1}
\end{equation*}
where $\calA^N$ is a stochastic operator that takes any empirical distribution $\frac{1}{N} \sum_{i=1}^N \delta_{x_i}$ and outputs $\frac{1}{N} \sum_{i=1}^N \delta_{a(x^i,V^i)}$. 
Moreover, in contrast to the previous construction (the particle system with resampling), the particles are now correlated, which makes the error analysis challenging (this is often studied under the propagation of chaos analysis~\cite{sznitman1991}) We leave the error analysis of the interacting particle system as the subject of future work. However,we empirically validate the performance of this approximation in Section~\ref{sec:numerics}.  


    \section{Error Analysis}\label{sec:error-analysis}
    The objective of this section is to study the approximation error of the 
OTPFs introduced above.  We begin with the analysis for the approximate mean-field process before
  turning our attention to the particle system with resampling.

\subsection{The mean-field analysis}\label{sec:approx-error}
The distance between the exact mean-field distribution $\bar \pi_t$ and the approximate distribution $\pi_t^\calF$
  is characterized by the following proposition.

\begin{proposition}\label{prop:mean-field}
  Consider $\bar \pi_t$ and $\pi_t^\calF$ as in~\eqref{eq:exact-mean-field}-\eqref{eq:approx-mean-field} respectively. Assume 
\begin{enumerate}
    \item The exact filter is stable according to Definition~\ref{def:stability}. 
    \item There exists $\epsilon_{\calF}>0$ such that  \begin{align}
        \inf_{f \in \cal F}J(f,\calA \pi^{\calF}_t) -    \inf_{f \in \text{CVX}_x}J(f,\calA\pi^{\calF}_t) \leq \epsilon_{\calF},\quad \forall t.\label{eq:rep-error}
    \end{align}
  \item
     For all $y$ and $t$ the function  $f^{\calF}_t( \cdot;y)$ is convex and $\nabla_x f_t^\calF(\cdot;y)$
      is $\beta$-Lipschitz.
    
\end{enumerate}
Then, it holds that
    \begin{align}\label{eq:pitF-bound}
        d(\pi^\calF_t,\pi_t) \leq \frac{C\sqrt{2\beta\epsilon_\calF}}{\lambda},\quad \forall t,
    \end{align}
    with all constants independent of time. 
\end{proposition}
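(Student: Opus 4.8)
The plan is to run the standard ``local error $+$ geometric filter stability $\Rightarrow$ uniform error'' staircase argument, with the optimal-transport-specific ingredient being the conversion of the optimization gap $\epsilon_\calF$ into an $L^2$ bound on the map error through the stability estimate for OT maps referenced in the introduction (Lemma~\ref{lem:opt-gap}). Since $\pi_0^\calF=\pi_0$ and $\pi_t=\calT_{t,0}\pi_0$, I would insert the exact filter $\calT_{t,s}$ at the intermediate times and telescope,
\begin{equation*}
  d(\pi_t^\calF,\pi_t)\;\le\;\sum_{s=1}^{t}d\big(\calT_{t,s}\,\pi_s^\calF,\;\calT_{t,s}\,\calT_s\,\pi_{s-1}^\calF\big),
\end{equation*}
using $\calT_{t,s-1}=\calT_{t,s}\circ\calT_s$; the $s=t$ summand is $d(\pi_t^\calF,\calT_t\pi_{t-1}^\calF)$ and the chain terminates at $\calT_{t,0}\pi_0^\calF=\pi_t$. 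A small point to track is that every operator along this chain uses the same realization of $Y_{s+1},\dots,Y_t$, so the filter-stability contraction can be applied summand by summand.

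\emph{Step 1 (one-step error).} Fixing $s$, I would observe that by construction $\pi_s^\calF=\nabla_x f_s^\calF(\cdot;Y_s)\#\,\calA\pi_{s-1}^\calF$, whereas $\calT_s\pi_{s-1}^\calF=\mathcal B_{Y_s}\calA\pi_{s-1}^\calF$. Since the transition kernel $a$ has a density, $\calA\pi_{s-1}^\calF$ has a density, so Proposition~\ref{prop-1} applies and gives $\calT_s\pi_{s-1}^\calF=\nabla_x\bar f_s(\cdot;Y_s)\#\,\calA\pi_{s-1}^\calF$ with $\bar f_s=\argmin_{f\in\text{CVX}_x}J(f,\calA\pi_{s-1}^\calF)$, for a.e.\ $Y_s$. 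Both measures being pushforwards of $\calA\pi_{s-1}^\calF$, Lemma~\ref{lem:T-d} yields
\begin{equation*}
  d\big(\pi_s^\calF,\calT_s\pi_{s-1}^\calF\big)^2\;\le\;\mathbb E\Big[\big\|\nabla_x f_s^\calF(\cdot;Y_s)-\nabla_x\bar f_s(\cdot;Y_s)\big\|_{L^2(\calA\pi_{s-1}^\calF)}^2\Big].
\end{equation*}
I would then apply Lemma~\ref{lem:opt-gap}: since $f_s^\calF(\cdot;y)$ is convex with $\beta$-Lipschitz gradient (assumption 3) and $\bar f_s$ is the exact Brenier potential, the right-hand side is bounded by $2\beta$ times the excess objective $J(f_s^\calF,\calA\pi_{s-1}^\calF)-\inf_{f\in\text{CVX}_x}J(f,\calA\pi_{s-1}^\calF)$; as $f_s^\calF$ minimizes $J(\cdot,\calA\pi_{s-1}^\calF)$ over $\calF$, this excess equals $\inf_{\calF}J-\inf_{\text{CVX}_x}J$ evaluated at $\calA\pi_{s-1}^\calF$, which is $\le\epsilon_\calF$ by assumption 2. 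Hence $d(\pi_s^\calF,\calT_s\pi_{s-1}^\calF)\le\sqrt{2\beta\epsilon_\calF}$ uniformly in $s$.

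\emph{Step 2 (assembly).} Using Definition~\ref{def:stability} on each summand, $d(\calT_{t,s}\pi_s^\calF,\calT_{t,s}\calT_s\pi_{s-1}^\calF)\le C(1-\lambda)^{t-s}\,d(\pi_s^\calF,\calT_s\pi_{s-1}^\calF)\le C(1-\lambda)^{t-s}\sqrt{2\beta\epsilon_\calF}$, and then $d(\pi_t^\calF,\pi_t)\le C\sqrt{2\beta\epsilon_\calF}\sum_{s=1}^{t}(1-\lambda)^{t-s}\le C\sqrt{2\beta\epsilon_\calF}\sum_{j\ge0}(1-\lambda)^j=C\sqrt{2\beta\epsilon_\calF}/\lambda$, which is the claimed time-uniform bound.

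I expect the main obstacle to be Step 1 rather than the telescoping: one must ensure Proposition~\ref{prop-1} is legitimately applicable (hence the density assumption on $a$, which guarantees $\calA\pi_{s-1}^\calF$ has a density even though $\pi_{s-1}^\calF$ is itself a pushforward), and one must have exactly the right quantitative OT-map stability estimate, with the $2\beta$ factor produced by the smoothness hypothesis in assumption 3 — this is the sole genuinely transport-theoretic ingredient and is precisely what forces assumption 3 into the statement. The remaining pieces (the staircase decomposition, the geometric summation, and carrying the observation randomness through the metric $d$) are routine once geometric filter stability is granted.
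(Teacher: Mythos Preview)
Your proposal is correct and follows essentially the same argument as the paper: the same telescoping decomposition through the intermediate measures $\calT_{t,s}\pi_s^\calF$, the same use of filter stability to produce the geometric factor $C(1-\lambda)^{t-s}$, the same application of Lemma~\ref{lem:T-d} to reduce the one-step error to an $L^2$ map error, and the same appeal to Lemma~\ref{lem:opt-gap} together with assumptions~2 and~3 to bound that by $2\beta\epsilon_\calF$. The only cosmetic difference is that the paper bounds the geometric sum via $\max_k$ times $1/\lambda$ whereas you sum directly; both give the same constant.
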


\begin{remark}
The first assumption in the proposition is used to ensure the error produced at each step of the algorithm does not grow with time. The second assumption is related to the representation power of the function class $\calF$ relative to the class of probability distributions introduced by the algorithm $\calA \pi^\calF_t$. For example, this error is zero when $\calF$ is a class of convex and quadratic functions, and the filtering problem is based on a linear Gaussian dynamic and observation model. In this case, probability distributions $\pi_t^\calF$ are Gaussian with the corresponding quadratic optimal function $\bar f_t$. 
In general, it is expected that the error is small when the distributions are inherently simple, e.g. when the problem exhibits low-dimensional structures or regularities. The analysis of these errors is the subject of representation theory~\cite{anthony1999neural,shalev2014understanding}. The last assumption is related to the regularity of the distributions $\mathcal A \pi_t^\calF$ and the resulting posterior distributions and can be  enforced
by an appropriate choice of the class $\calF$.
\end{remark}

\begin{proof}
  To simplify the presentation, we introduce the operator $ \pi \mapsto \calT_t^\calF \pi := \nabla f^\calF_t(\cdot,Y_t)\#\calA\pi$ for all $t$, to denote the update law for the approximate mean-field distribution in~\eqref{eq:approx-mean-field-dist}. The first step in the proof  is to use the triangle inequality and the filter stability to bound the  error between $\pi_t$ and $\pi^\calF_t$
  as follows: 
\begin{equation*}
    \begin{aligned}
    d(\pi_t,\pi_t^\mathcal{F}) & \leq \sum_{k=1}^t d(\calT_{t,k-1}\pi_{k-1}^\mathcal{F},\calT_{t,k}\pi_k^\mathcal{F})\\
    & \leq \sum_{k=1}^t d(\calT_{t,k}\calT_{k}\pi_{k-1}^\mathcal{F},\calT_{t,k}\calT_{k}^\mathcal{F}\pi_{k-1}^\mathcal{F})\\
    & \leq \sum_{k=1}^t C(1-\lambda)^{t-k}d(\calT_{k}\pi_{k-1}^\mathcal{F},\calT_{k}^\mathcal{F}\pi_{k-1}^\mathcal{F})\\
    &\leq \frac{C}{\lambda} \max_{k \in \{1,2,\ldots,t\}}\{d(\calT_{k}\pi_{k-1}^\mathcal{F},\calT_{k}^\mathcal{F}\pi_{k-1}^\mathcal{F})\}.
    \end{aligned}
\end{equation*}
Next, we use Lemma~\ref{lem:T-d} to bound the distance 
\begin{align*}
  d(\calT_{k}\pi_{k-1}^\mathcal{F}&,\calT_{k}^\mathcal{F}\pi_{k-1}^\mathcal{F})\\&=d(\nabla \bar f_k(\cdot;Y_k)\#\calA \pi_{k-1}^\mathcal{F},\nabla f_k^\calF(\cdot;Y_t)\#\calA\pi_{k-1}^\mathcal{F})\\&\leq \mathbb E\left[\|\nabla \bar f_k(\cdot;Y_k) - \nabla f^\calF_k(\cdot;Y_k) \|^2_{L^2(\calA \pi_{k-1}^\calF)}\right]^{\frac{1}{2}} 
\end{align*}
for all $k\geq 0$. 
Finally, we use the second and third assumptions in the proposition to obtain a uniform bound for the error between $\nabla \bar f_k$ and $\nabla f^\calF_k$  using Lemma~\ref{lem:opt-gap} (is outlined below)
\begin{align*}
\mathbb E&\left[\|\nabla \bar f_k(\cdot;Y_k) - \nabla f^\calF_k(\cdot;Y_k) \|^2_{L^2(\calA \pi_{k-1}^\calF)}\right]\\&\leq 2\beta (J(f_k^\calF,\calA\pi_{k-1}^\calF) - J(\bar f_k,\calA\pi_{k-1}^\calF))\\
&\leq 2\beta  \epsilon_\calF
\end{align*}
concluding the final bound~\eqref{eq:pitF-bound}.  
\end{proof}
\begin{lemma}\label{lem:opt-gap}
Consider the optimization problem~\eqref{eq:opt-problem} with the objective function~\eqref{eq:obj-func}. Assume $\pi$ admits density.   Let $\bar f$ be the optimal function and $f$ be an arbitrary convex 
 and $\beta$-smooth function. Then,
\begin{align*}
   J(\pi,f) - J(\pi,\bar f) \geq \frac{1}{2\beta}\mathbb E[ \|\nabla f(\bar X;Y) - \nabla \bar f(\bar X;Y) \|^2]. 
\end{align*}
\end{lemma}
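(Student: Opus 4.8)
The plan is to rewrite the optimization gap $J(f,\pi)-J(\bar f,\pi)$ as the expectation of a pointwise Fenchel--Young gap for the convex function $f(\cdot;Y)$, and then to lower bound that gap using $\beta$-smoothness. (If $J(f,\pi)=+\infty$ the claimed inequality is trivial, so we may assume it is finite.)

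The first step rewrites the conjugate term in $J(f,\pi)=\mathbb E[f(\bar X;Y)+f^\ast(X;Y)]$, where $(X,Y)$ is jointly distributed and $\bar X\sim\pi$ is an independent copy of $X$ (so $\bar X$ is independent of $Y$ as well). Conditioning on $Y$ and invoking Proposition~\ref{prop-1}, the conditional law of $X$ given $Y$ equals $\mathcal B_Y\pi=\nabla_x\bar f(\cdot;Y)\#\pi$, hence
\begin{align*}
\mathbb E[f^\ast(X;Y)] = \mathbb E\big[f^\ast(\nabla_x\bar f(\bar X;Y);Y)\big],
\end{align*}
the expectation on the right being over the independent pair $(\bar X,Y)$. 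Applying the same identity to $\bar f$, together with the Fenchel--Young \emph{equality} $\bar f^\ast(\nabla_x\bar f(x;y);y)=\langle x,\nabla_x\bar f(x;y)\rangle-\bar f(x;y)$, which holds because $\nabla_x\bar f(\cdot;y)$ is a subgradient of the convex map $\bar f(\cdot;y)$, the $\bar f(\bar X;Y)$ contributions cancel and we obtain
\begin{align*}
J(f,\pi)-J(\bar f,\pi)=\mathbb E\big[f(\bar X;Y)+f^\ast(\nabla_x\bar f(\bar X;Y);Y)-\langle \bar X,\nabla_x\bar f(\bar X;Y)\rangle\big].
\end{align*}

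The second step is a pointwise lower bound on the integrand: for any differentiable convex $\beta$-smooth $g:\mathbb R^n\to\mathbb R$ and all $x,w\in\mathbb R^n$, $g(x)+g^\ast(w)-\langle x,w\rangle\ge\frac{1}{2\beta}\|\nabla g(x)-w\|^2$. This follows by evaluating $g^\ast(w)\ge\langle z,w\rangle-g(z)$ at the explicit point $z=x-\frac{1}{\beta}(\nabla g(x)-w)$ and combining with the descent inequality $g(z)\le g(x)+\langle\nabla g(x),z-x\rangle+\frac{\beta}{2}\|z-x\|^2$. Applying this with $g=f(\cdot;Y)$, $x=\bar X$, $w=\nabla_x\bar f(\bar X;Y)$ bounds the integrand below by $\frac{1}{2\beta}\|\nabla f(\bar X;Y)-\nabla\bar f(\bar X;Y)\|^2$; taking expectations yields the claim.

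The step I expect to be the main obstacle is the first one: justifying the change of variables through the Brenier map $\nabla_x\bar f(\cdot;Y)$ under conditioning on $Y$ — in particular the joint measurability in $(x,y)$ needed to apply Proposition~\ref{prop-1} inside the expectation, together with the integrability required to split and recombine the terms — and confirming that the Fenchel--Young relation for $\bar f$ holds with equality rather than merely the generic inequality. The second step is the standard smoothness/co-coercivity estimate and is essentially self-contained.
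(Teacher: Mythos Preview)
Your proposal is correct and is precisely the extension of \cite[Prop.~10]{hutter2019minimax} that the paper invokes but omits: rewrite the conjugate term via the Brenier pushforward $X\mid Y\sim\nabla_x\bar f(\cdot;Y)\#\pi$ from Proposition~\ref{prop-1}, reduce the gap to a Fenchel--Young defect, and lower bound it by $\frac{1}{2\beta}\|\nabla f-\nabla\bar f\|^2$ using $\beta$-smoothness. The measurability/integrability caveats you flag are the right ones but are routine here since Proposition~\ref{prop-1} gives the identity for a.e.\ $y$ and the expectations are assumed finite.
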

\medskip
\begin{proof}
  The proof is an extension of the result~\cite[Prop. 10]{hutter2019minimax} and omitted on the account of space.
  \end{proof}

\subsection{The particle-system-with-resampling analysis}\label{sec:finite-N-analysis}
Next, we analyze the error between the particle system~\eqref{eq:finite-N} and the exact mean-field process~\eqref{eq:exact-mean-field}. The process is similar to the mean-field analysis presented in the previous section, with an additional error due to the sampling operator and the empirical approximations. 

\begin{proposition}\label{prop:finite-N-resampled}
  Consider the exact mean-field distribution $\bar \pi_t$  and the particle distribution
  $\tilde \pi_t^{(\calF, N)}$
  defined in~\eqref{eq:exact-mean-field} and~\eqref{eq:finite-N}, respectively. Assume
\begin{enumerate}
    \item The exact filter is stable according to Definition~\ref{def:stability}. 
    \item There exists a constant $\epsilon_{\calF,N}>0$ such that  for all  $t$ and $N$:
    \begin{align*}
        \inf_{f \in \calF}\!J(f,\calS^N\!\calA \tilde \pi^{(\calF,N)}_t)\! - \!   \inf_{f \in \text{CVX}_x}\!J(f,\calA\tilde \pi^{(\calF,N)}_t) \leq \epsilon_{\calF,N}
    \end{align*}
  \item For all $y$, $t$, and $N$,  
    the function $ f^{(\calF,N)}_t(\cdot ;y)$ is convex and $\nabla_x f^{(\calF,N)}_t(\cdot ;y)$
    is  $\beta$-Lipschitz.
  \end{enumerate}
Then, it holds that
    \begin{align}\label{empirical-process-error-bound}
      d(\tilde \pi^{(\calF,N)}_t,\pi_t) \leq
      \frac{C}{\lambda} \left(\sqrt{2\beta\epsilon_{\calF,N}} + \frac{1}{\sqrt{N} }\right),\quad \forall t,
    \end{align}
    where all constants are time-independent.    
\end{proposition}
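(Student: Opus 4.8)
The plan is to replay the proof of Proposition~\ref{prop:mean-field} almost verbatim, inserting one extra layer to absorb the sampling operator $\calS^N$. Write $\tilde\calT_k$ for the (random) one-step map $\pi\mapsto \nabla_x\tilde f_k^{(\calF,N)}(\cdot;Y_k)\#\calS^N\calA\pi$, with $\tilde f_k^{(\calF,N)}=\argmin_{f\in\calF}J(f,\calS^N\calA\pi)$, so that $\tilde\pi_k^{(\calF,N)}=\tilde\calT_k\tilde\pi_{k-1}^{(\calF,N)}$ and $\tilde\pi_0^{(\calF,N)}=\pi_0$. Telescoping along the hybrid sequence $\nu_k:=\calT_{t,k}\tilde\pi_k^{(\calF,N)}$, which runs from $\nu_0=\calT_{t,0}\pi_0=\pi_t$ to $\nu_t=\tilde\pi_t^{(\calF,N)}$, and using $\calT_{t,k-1}=\calT_{t,k}\circ\calT_k$, the triangle inequality, the filter-stability assumption (Definition~\ref{def:stability}), and $\sum_{k=1}^t(1-\lambda)^{t-k}\le 1/\lambda$, reduces the claim to a time-uniform one-step estimate
\[
d\bigl(\calT_k\tilde\pi_{k-1}^{(\calF,N)},\,\tilde\calT_k\tilde\pi_{k-1}^{(\calF,N)}\bigr)\;\le\;\sqrt{2\beta\epsilon_{\calF,N}}+\tfrac{1}{\sqrt N},\qquad\forall k,
\]
after which \eqref{empirical-process-error-bound} follows with the stated $C/\lambda$ prefactor.

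For the one-step estimate, abbreviate $\mu:=\calA\tilde\pi_{k-1}^{(\calF,N)}$ and $\mu^N:=\calS^N\mu$; since $\mu$ is a finite mixture of transition kernels it is absolutely continuous, so Proposition~\ref{prop-1} applies and gives $\calT_k\tilde\pi_{k-1}^{(\calF,N)}=\calB_{Y_k}\mu=\nabla_x\bar f_k(\cdot;Y_k)\#\mu$ with $\bar f_k$ the $\text{CVX}_x$-minimizer of $J(\cdot,\mu)$, while $\tilde\calT_k\tilde\pi_{k-1}^{(\calF,N)}=\nabla_x\tilde f_k^{(\calF,N)}(\cdot;Y_k)\#\mu^N$. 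Inserting the intermediate measure $\nabla_x\tilde f_k^{(\calF,N)}(\cdot;Y_k)\#\mu$ and applying the triangle inequality splits the one-step error into a \emph{map-approximation} term $d\bigl(\nabla\bar f_k(\cdot;Y_k)\#\mu,\nabla\tilde f_k^{(\calF,N)}(\cdot;Y_k)\#\mu\bigr)$ and a \emph{sampling} term $d\bigl(\nabla\tilde f_k^{(\calF,N)}(\cdot;Y_k)\#\mu,\nabla\tilde f_k^{(\calF,N)}(\cdot;Y_k)\#\mu^N\bigr)$. For the first, Lemma~\ref{lem:T-d} bounds it by $\mathbb E[\|\nabla\bar f_k-\nabla\tilde f_k^{(\calF,N)}\|^2_{L^2(\mu)}]^{1/2}$; the regularity assumption makes $\tilde f_k^{(\calF,N)}$ convex and $\beta$-smooth, so Lemma~\ref{lem:opt-gap} turns this into $\sqrt{2\beta\,(J(\tilde f_k^{(\calF,N)},\mu)-J(\bar f_k,\mu))}$, and the population objective gap is controlled via the representation-gap assumption, which bounds $\inf_{\calF}J(\cdot,\mu^N)-\inf_{\text{CVX}_x}J(\cdot,\mu)\le\epsilon_{\calF,N}$, together with the empirical-to-population transfer $|J(\tilde f_k^{(\calF,N)},\mu)-J(\tilde f_k^{(\calF,N)},\mu^N)|\le\sup_{f\in\calF}|J(f,\mu)-J(f,\calS^N\mu)|$. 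For the sampling term, $\nabla\tilde f_k^{(\calF,N)}(\cdot;Y_k)$ is uniformly $\beta$-Lipschitz and uniformly bounded, so test functions $g\in\mathcal G$ pull back to rescaled bounded-Lipschitz functions and the term is $\lesssim\max(1,\beta)\,d(\mu,\calS^N\mu)$; the elementary variance computation $\mathbb E\big|\!\int g\,d\mu-\tfrac1N\sum_i g(X^i)\big|^2=\tfrac1N\mathrm{Var}_\mu(g)\le\tfrac1N$ for $g\in\mathcal G$ (condition on $\mu$ first, since $\mu$ is random) yields $d(\mu,\calS^N\mu)\le N^{-1/2}$.

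The main obstacle is the empirical-to-population transfer appearing in the map-approximation term: the fitted map $\tilde f_k^{(\calF,N)}$ depends on the very samples defining $\mu^N$, so $J(\tilde f_k^{(\calF,N)},\mu)-J(\tilde f_k^{(\calF,N)},\mu^N)$ is the generalization gap of an empirical minimizer and cannot be dispatched by a plain law of large numbers — it requires a uniform concentration estimate $\mathbb E[\sup_{f\in\calF}|J(f,\mu)-J(f,\calS^N\mu)|]$ over $\calF$ (and the corresponding conjugates), whose rate is governed by the complexity of $\calF$ and is itself $O(N^{-1/2})$ on a compact state space for the regular classes of interest; this is precisely the quantity one tracks to expose the bias–variance / curse-of-dimensionality trade-off advertised in the introduction. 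Note, however, that pushing such a term through the square root in Lemma~\ref{lem:opt-gap} nominally contributes only $O(N^{-1/4})$ to the map part, so to land exactly on the stated $\sqrt{2\beta\epsilon_{\calF,N}}+N^{-1/2}$ one should either account for this channel inside the $\epsilon_{\calF,N}$ budget (the hybrid empirical/population form of the representation-gap assumption is arguably designed with this in mind) or impose correspondingly stronger regularity on $\calF$; I would make that accounting, together with an explicit complexity hypothesis on $\calF$ (bounded, uniformly smooth functions on a compact domain with controlled covering numbers), part of the write-up and absorb the resulting constant into $C$. A secondary point worth verifying carefully at every step is that $\mu=\calA\tilde\pi_{k-1}^{(\calF,N)}$ genuinely admits a density, so that Proposition~\ref{prop-1} is legitimately invoked in the recursion; this is where the smoothing nature of $\calA$ and mild regularity of the transition kernel $a(\cdot\,|\,\cdot)$ (equivalently of $\bar a(x,v)$) enter.
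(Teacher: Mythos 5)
Your proposal follows the paper's proof essentially step for step: the same telescoping-plus-stability reduction to a time-uniform one-step bound with prefactor $C/\lambda$, the same triangle-inequality split of the one-step error into a map-approximation term (handled via Lemma~\ref{lem:T-d} and Lemma~\ref{lem:opt-gap}) and a sampling term of order $N^{-1/2}$. Two comparative notes: for the sampling term the paper uses only the uniform boundedness of the test functions $g$ (so $g\circ\nabla_x\tilde f_k^{(\calF,N)}$ remains bounded by one), which avoids your $\max(1,\beta)$ factor and any boundedness hypothesis on $\nabla_x\tilde f_k^{(\calF,N)}$; and the empirical-to-population transfer you flag is a genuine subtlety, since the paper's proof asserts $J(\tilde f_k^{(\calF,N)},\calA\tilde\pi^{(\calF,N)}_{k-1})-J(\bar f_k,\calA\tilde\pi^{(\calF,N)}_{k-1})\le\epsilon_{\calF,N}$ directly from Assumption~2, whose left-hand side literally controls only the empirical value of the fitted function, so your suggestion to book the generalization gap of the empirical minimizer inside the $\epsilon_{\calF,N}$ budget (consistent with the decomposition in Remark~\ref{rem:error}) is the more careful reading rather than a departure from the paper's argument.
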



\begin{proof}
  The proof is similar to that of Proposition~\ref{prop:mean-field}.
  Define the operator
  $ \tilde{\calT}_t^{(\calF,N)}: \pi \mapsto \nabla \tilde f^{(\calF,N)}_t(\cdot,Y_t)\#\calS^N \calA\pi$.
  Then, the triangle inequality and  filter stability imply
\begin{align*}
    d(\pi_t,\pi_t^{(\calF,N)}) & \leq  \frac{C}{\lambda} \max_{k \in \{1,2,\ldots,t\}}\{d(\calT_{k}\pi_{k-1}^{(\calF,N)},\tilde \calT_{k}^{(\calF,N)}\pi_{k-1}^{(\calF,N)})\}.
\end{align*}
Applying the  triangle inequality again, we can write
\begin{align*}
d&(\calT_{k}\pi_{k-1}^{(\calF,N)},\tilde \calT_{k}^{(\calF,N)}\pi_{k-1}^{(\calF,N)}) \\&=d( \nabla \bar  
 f_k(\cdot,Y_t)\#\calA \pi^{(\calF,N)}_{k-1},\nabla  f_k^{(\calF,N)}\#\calS^N\calA\pi^{(\calF,N)}_{k-1})\\
&\leq d( \nabla \bar  
 f_k(\cdot,Y_t)\#\calA \pi^{(\calF,N)}_{k-1},\nabla  f_k^{(\calF,N)}\#\calA\pi^{(\calF,N)}_{k-1})\\
&+d( \nabla  f_k^{(\calF,N)}\#\calA\pi^{(\calF,N)}_{k-1},\nabla  f_k^{(\calF,N)}\#\calS^N\calA\pi^{(\calF,N)}_{k-1})
\end{align*}
By application of Lemma~\ref{lem:T-d} and Lemma~\ref{lem:opt-gap}, the first term is upper-bounded by the square-root of  
\begin{align*}
     \mathbb E&\left[\|\nabla \bar f_k(\cdot;Y_k) - \nabla f^{(\calF,N)}_k(\cdot;Y_k) \|^2_{L^2(\calA \tilde \pi_{k-1}^{(\calF,N)})}\right]\\
     &\leq 2\beta(J(f_k^{(\calF,N)},\calA \tilde \pi_{k-1}^{(\calF,N)})-J(\bar f_k,\calA \tilde \pi_{k-1}^{(\calF,N)}))\\
     &\leq 2\beta \epsilon_{\calF,N}
\end{align*}
where we used the second and the third assumptions.
This gives the first term on the right-hand side of \eqref{empirical-process-error-bound}.
The second term 
is due to the sampling error and upper-bounded by $\frac{1}{\sqrt{N}}$ since the test functions $g$ in the definition of the metric $d$ are uniformly bounded by one (e.g. see~\cite[Lemma 2.17]{rebeschini2015can}).  Adding the two errors concludes the final bound. 

\end{proof}

\begin{remark}\label{rem:error}
The assumptions of this proposition are similar to the assumptions in Proposition~\ref{prop:mean-field} with a slight difference in the second assumption. The bound in the second assumption can be decomposed into two terms:
\begin{align*}
        &\inf_{f \in \calF}\!J(f,\calS^N\!\calA \tilde \pi^{(\calF,N)}_t)\! - \!   \inf_{f \in \calF}\!J(f,\calA\tilde \pi^{(\calF,N)}_t) \\
        +&\inf_{f \in \calF}\!J(f,\!\calA \tilde \pi^{(\calF,N)}_t)\! - \!   \inf_{f \in \text{CVX}_x}\!J(f,\calA\tilde \pi^{(\calF,N)}_t) 
    \end{align*}
The second term is similar to the one used in Proposition~\ref{prop:mean-field} and related to the representation power of $\calF$. The first term 
corresponds to the statistical generalization errors due to  approximating distributions with empirical samples and the subject of statistical generalization theory~\cite{shalev2014understanding,arora2017generalization,liu2017approximation,zhang2017discrimination}. The error is expected to scale according to $O(\frac{C_{\mathcal F}}{\sqrt{N}})$  where the constant $C_{\mathcal F}$ is a proxy for the complexity of the class of functions $\mathcal F$, and independent of the dimension $d$. The first term can also be interpreted as the variance, while the second term is the bias. Then our
  error analysis is a manifestation of the bias-variance trade-off dependent on the
  complexity of the function class $\mathcal F$. Similar bias-variance trade-offs also appear in the analysis of local
PFs in~\cite{rebeschini2015can}. 
\end{remark}

        \section{Numerical Experiments}\label{sec:numerics}
     

We use a numerical example to illustrate the proposed \ALG~
in comparison with two other filters: the EnKF~\cite{evensen2006}, and  the
sequential importance resampling (SIR) PF~\cite{doucet09}.

For the \ALG, we solve a  min-max formulation of the variational problem~\eqref{eq:opt-problem}, as described in~\cite[Sec. III-B]{taghvaei2022optimal} and originally proposed in~\cite{makkuva2020optimal} for
estimating OT maps. The min-max formulation involves optimization over an additional convex function 
$\psi$
which is used to represent the convex conjugate $f^\ast$ as follows:
\begin{align*}
    \mathbb E[f^\ast(X;Y)] \!= \!\max_{\psi \in \text{CVX}_x}\!\mathbb E[X^\top \nabla_x \psi(X;Y) \!-\! f(\nabla_x \psi(X;Y);Y)]
\end{align*}
However, in our numerical experiments, we observed that relaxing the constraint and optimizing over a map $T(x;y)$ instead of $\nabla_x \psi(x;y)$ 
produces better numerical results due to the
additional freedom in the parameterization. Therefore, we use the formulation 
\begin{align*}
    \mathbb E[f^\ast(X;Y)] = \max_{T} \mathbb E[X^\top  T(X;Y) \!-\! f(T(X;Y);Y)]
\end{align*}
Note that this does not change the optimization problem because the optimal $T$ is of gradient form and equal to $\nabla_x f^\ast$. The final objective function takes the form
\begin{equation}\label{eq:opt-problem-numerics}
\begin{aligned}
&\min_{f \in \text{ICNN}} \max_{T\in \text{ResNet}} \{ \Expect_{P_{XY}}[f(X,Y)]  \\&\qquad + \Expect_{P_X \otimes P_Y}[X^TT(X,Y) - f(T(X,Y),Y)]\} 
\end{aligned}
\end{equation}
\begin{remark}
Note that we changed the role of source $P_X \otimes P_Y$ and the target $P_{XY}$, compared to the original formulation~\eqref{eq:var-form},   
  so that $T$ represents the transport map from the prior to the posterior, instead of $\nabla_x f$. This formulation leads to a more convenient parameterization of the map. 
  \end{remark}
  
Similar relaxations to the above have also been found to be beneficial for computing Wasserstein barycenters~\cite{fan2020scalable} and Wasserstein gradient flows~\cite{fan2021variational}. 
  Here ICNN denotes the set of  partially input convex neural networks~\cite{amos2016input}.

  To illustrate the performance of the filters, consider the  following dynamics and observation model:
\begin{subequations}\label{eq:model-example}
\begin{align}
    X_{t} &= (1-\alpha) X_{t-1} + 2\sigma V_t,\quad X_0 \sim \mathcal{N}(0,I_n)\\
    Y_t &= h(X_t) + \sigma W_t
\end{align}
\end{subequations}
for $t=1,2,3,\ldots$, where $X_t,Y_t \in \mathbb R^n$, $\{V_t\}_{t=1}^\infty$ and $\{W_t\}_{t=1}^\infty$ are i.i.d sequences of $n$-dimensional standard Gaussian random variables, $\alpha=0.1$ and $\sigma=\sqrt{0.1}$. We use three observation functions:
\begin{align*}
    h(x)=x,\quad h(x)=x \odot x,\quad h(x)=x \odot x \odot x
\end{align*}
where $\odot$ denotes the element-wise (i.e., Hadamard) product when $x$ is a vector.
  
 In order to solve~\eqref{eq:opt-problem-numerics}, we parameterize ICNN as 
\begin{align*}
f(x;y) = \sum_{k=1}^K W_k(x^\top W^x_k+y^\top W^y_k+b_k)_+^2
\end{align*} 
where $W_k\geq 0$, $W^x_k,W^y_k \in \mathbb R^n ~,b_k \in \mathbb R$ for $k=1,\ldots, K$, and $K=32$ is the size of the network. 
The map $T$ is modeled with a standard residual network with two blocks of size $32$ and a ReLU-activation function. 

  We used the  ADAM optimizer to solve the min-max problem with learning rate $10^{-2}$, inner-loop iteration $10$, and the total number of iterations $1024$, which is divided by $2$ after each time step (of the filtering problem) until it reaches $64$. Each iteration involves a random selection of a batch of samples of size $32$ from the total of $N=1000$ particles $\{(X^1_t,Y^1_t),\ldots,(X^N_t,Y^N_t)\}$. Observation samples $Y^i_t$ are produced using the observation model: $Y^i_t\sim h(\cdot|X^i_t)$. Samples from the independent coupling $P_X\otimes P_Y$ are generated by random shuffling.  The number of particles $N$ is the same for all algorithms. The details of the numerical code is available online\footnote{\url{https://github.com/Mohd9485/OT-EnKF-SIR}}.


The numerical results are presented in Figure~\ref{fig:true_states} for a two-dimensional problem $n=2$, while the figure only shows the first component (We choose $n=2$ because the SIR and OT approach did not differ significantly when $n=1$, while the difference became apparent with $n=2$.) The figure shows the trajectory of the particles along with the trajectory of the hidden state. The first experiment, depicted in panel (a), illustrates the performance of a linear observation function. As expected, all three algorithms behave similarly as all of them are able to capture the exact solution, which is Gaussian in this case, and obtained using linear maps. 
The second experiment, depicted in panel (b), involves the quadratic observation function $h(x) = x \odot x$.
This is an interesting case since the problem is not observable, and we expect to see a (symmetric) bimodal distribution. It is observed that EnKF fails to represent the bimodal distribution while
both OT and SIR capture the two modes, although, SIR exhibits  mode collapse in the time range of $t\in [2, 3.5]$.
Finally, both SIR and OT perform better than ENKF for the cubic observation function
$h(x) = x \odot x \odot x$, depicted in panel (c), as expected due to the strong nonlinearity in
the observation model.

We also quantify the performance of all algorithms in these three experiments by computing the mean-squared-error (MSE) in estimating a function $\phi$ of the state:
\begin{equation}\label{eq:performance_test}
        \text{MSE}_t(\phi) = \mathbb{E} \|\frac{1}{N}\sum_{i=1}^N\phi(X_t^i)-\phi({X}_t)\|^2 
\end{equation}  
where the empirical average approximates the expectation over $100$ independent simulations.  The results
  are depicted 
  in Figure~\ref{fig:mse}.  For the linear and cubic 
  observation models, we used $\phi(x)=x$.
For the quadratic case, we used $\phi(x) = \max(0,x)$
(comparing the estimated and  true means is not a good criterion for the quadratic case because the distribution is bimodal with mean equals to zero).
\begin{figure*}[t]
    \centering
     \subfigure[]{
         \centering
         \includegraphics[width=0.3\hsize]{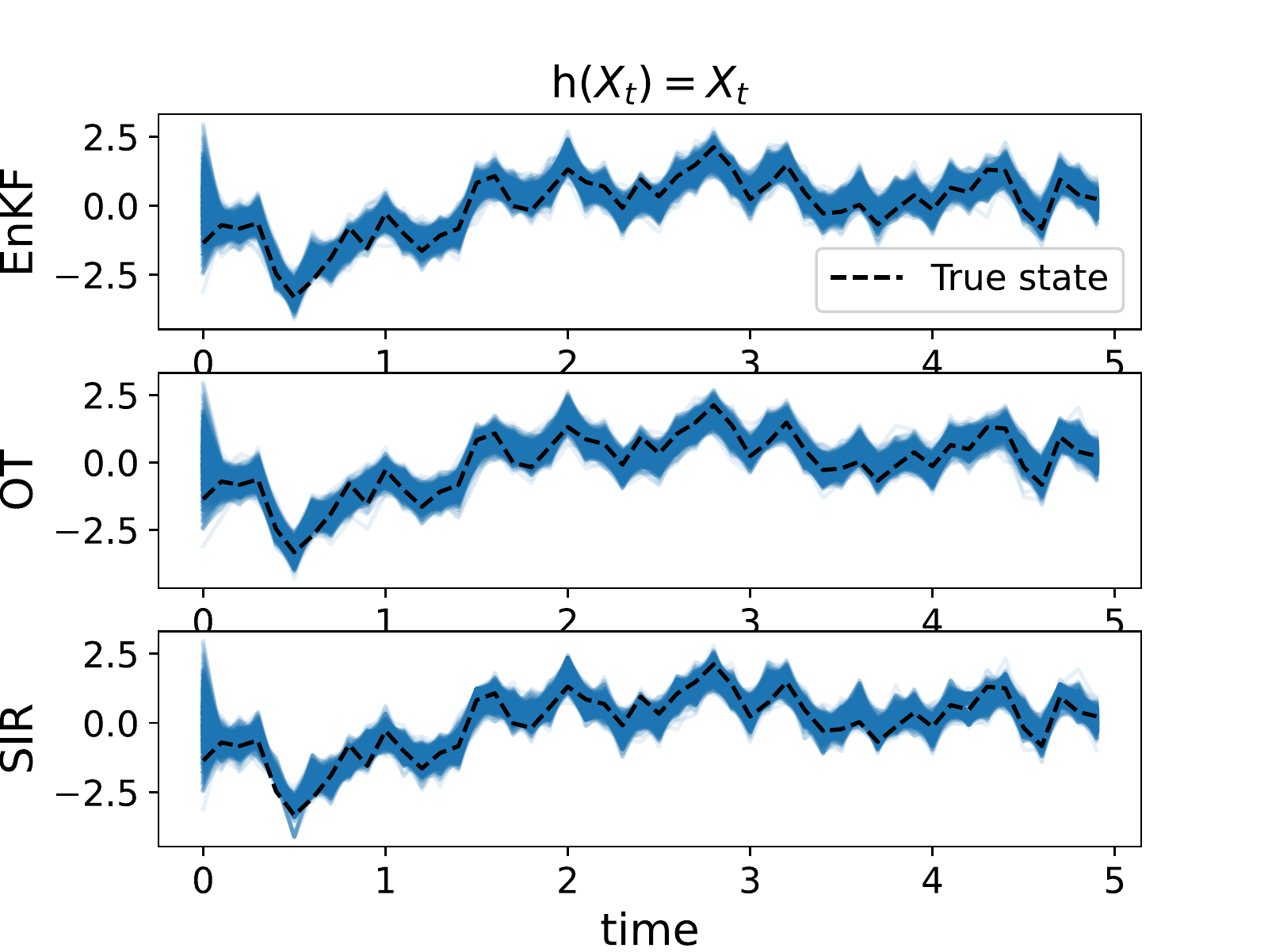}
     }
     \subfigure[]{
     \centering
         \includegraphics[width=0.3\hsize]{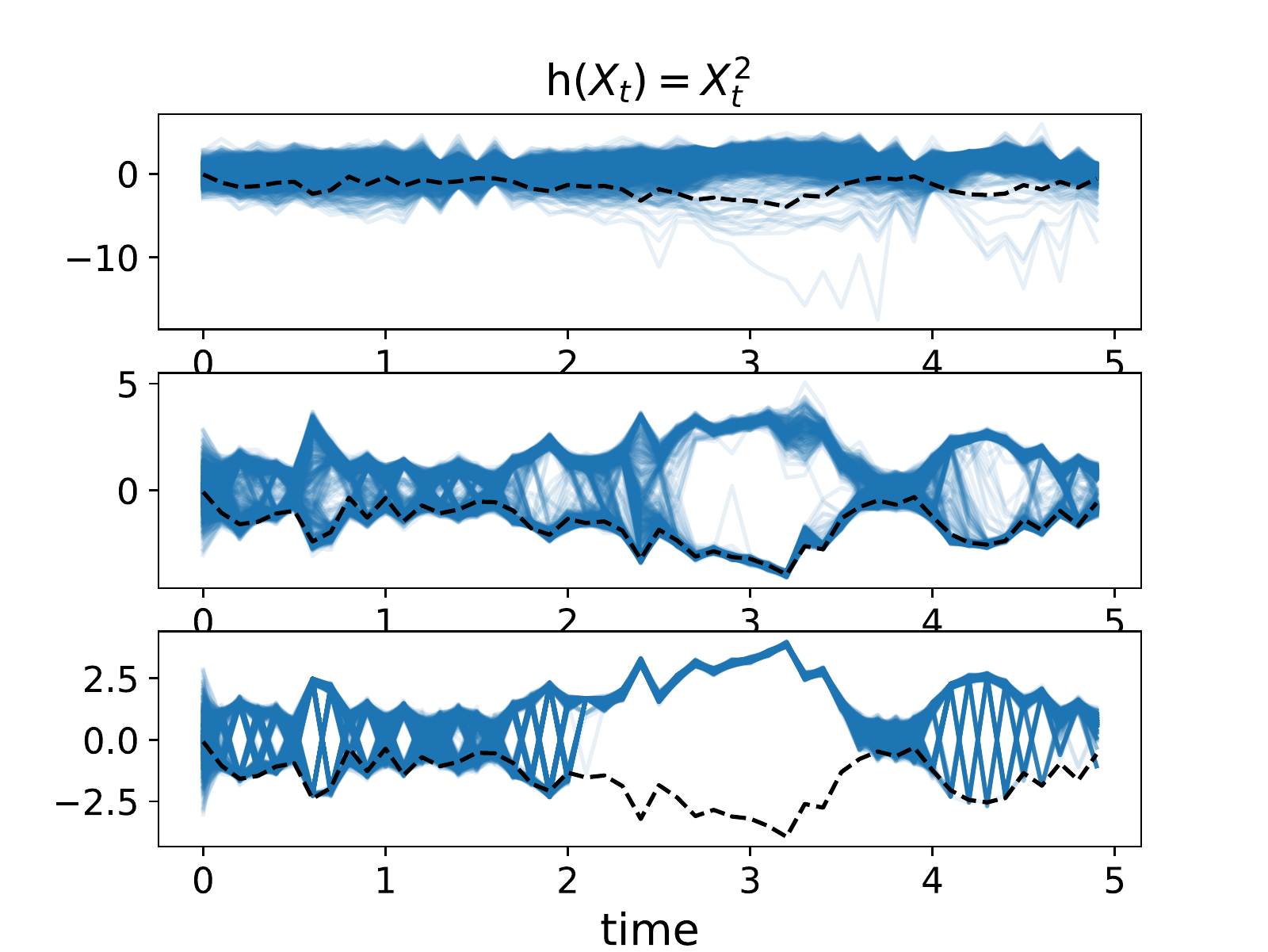}
     }
     \subfigure[]{
     \centering
         \includegraphics[width=0.3\hsize]{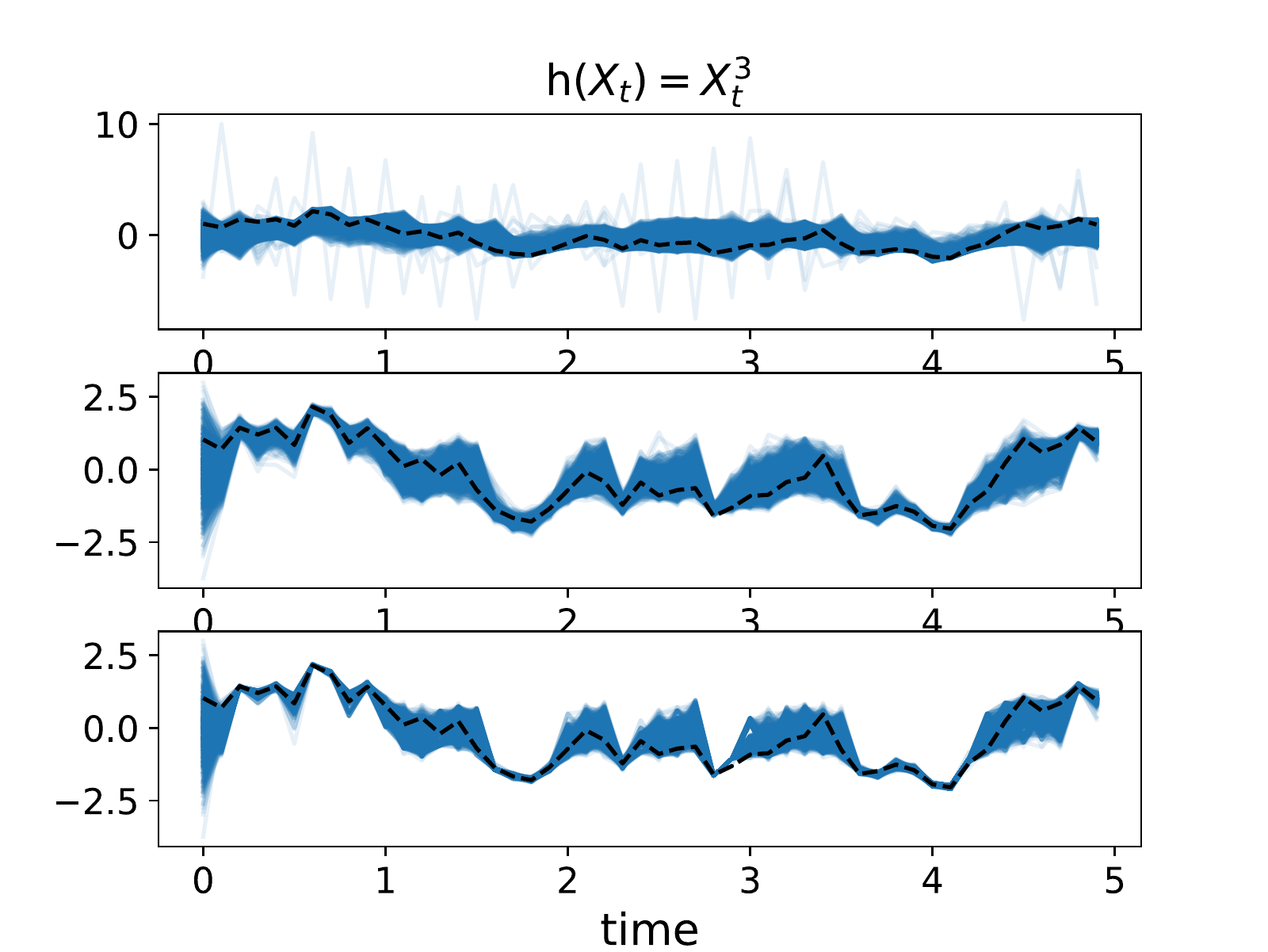}
     }
     \caption{Numerical result for the application of the ensemble Kalman filter, optimal transport particle filter, and sequential importance resampling particle filter, denoted by EnKf, OT, and SIR in the figure respectively, on the numerical example~\eqref{eq:model-example}. The figure shows the trajectory of the particles $\{X^1_t,\ldots,X^N_t\}$ along with the trajectory of the true state $X_t$. The result include three observation functions: (a) $h(x)=x$, (b) $h(x)=x\odot x$, and (c) $h(x)=x\odot x\odot x$.}
    \label{fig:true_states}
\end{figure*}
\begin{figure*}[t]
    \centering
     \subfigure[]{
         \centering
         \includegraphics[width=0.3\hsize]{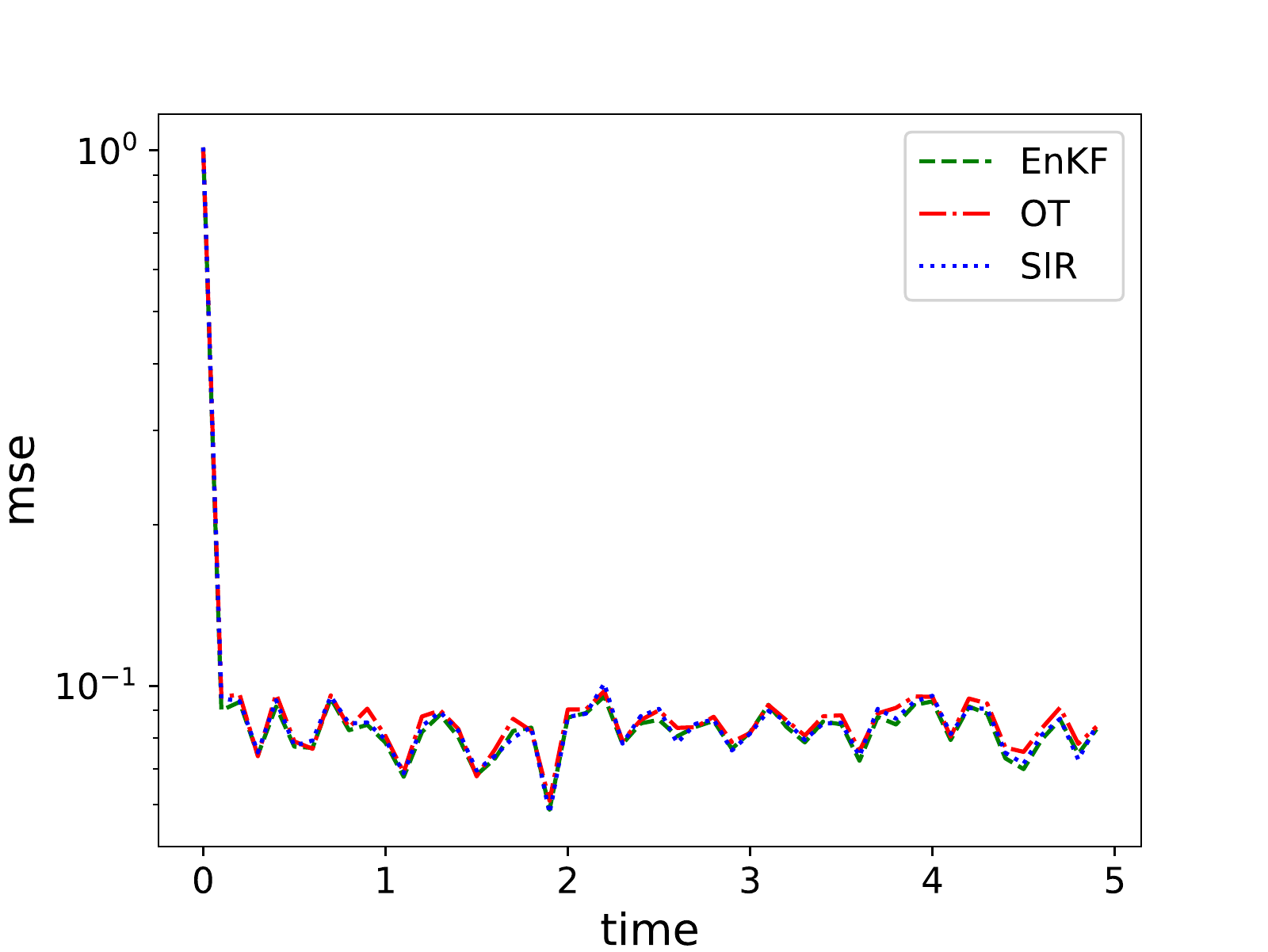}
     }
     \subfigure[]{
     \centering
         \includegraphics[width=0.3\hsize]{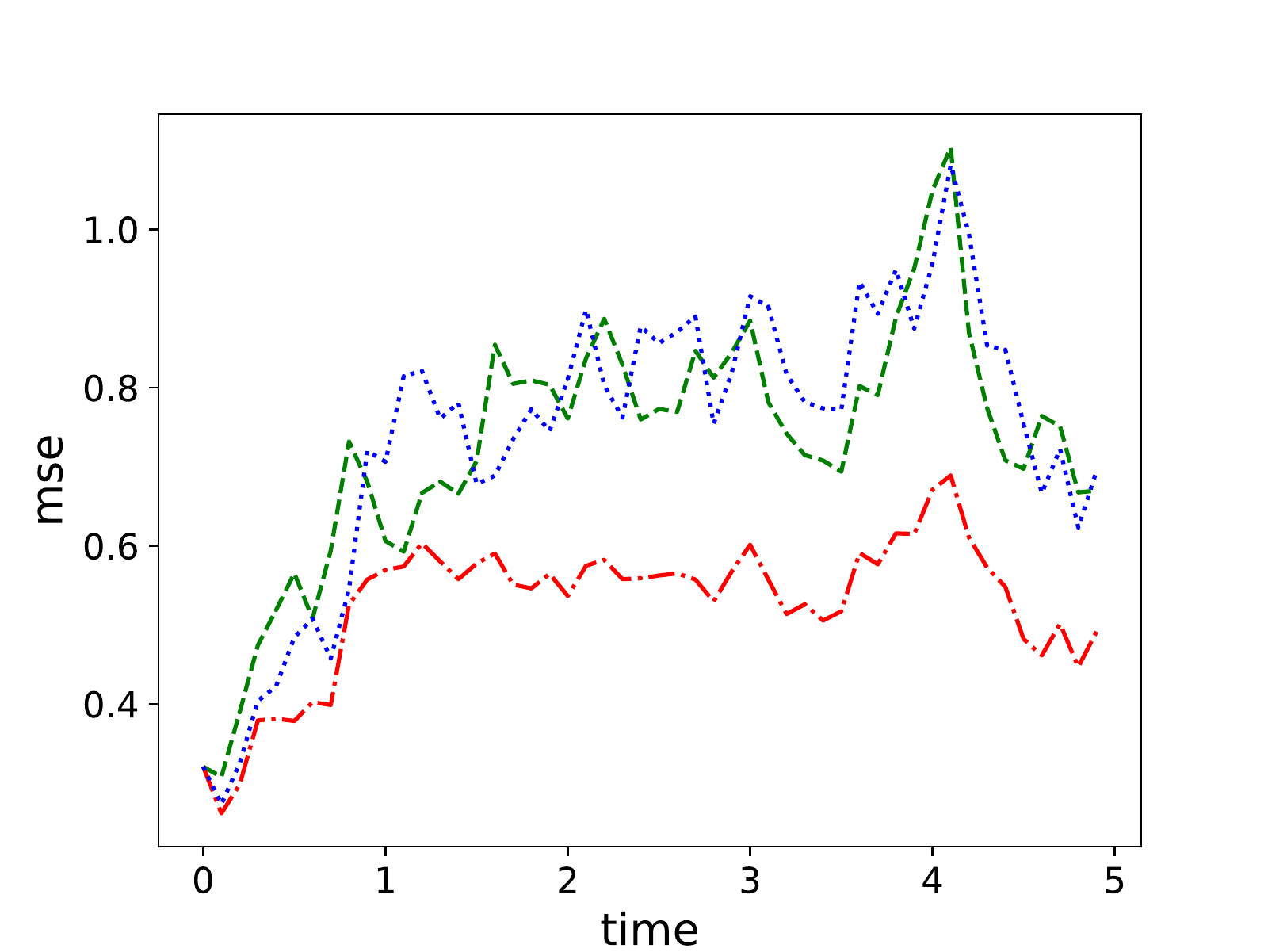}
     }
     \subfigure[]{
     \centering
         \includegraphics[width=0.3\hsize]{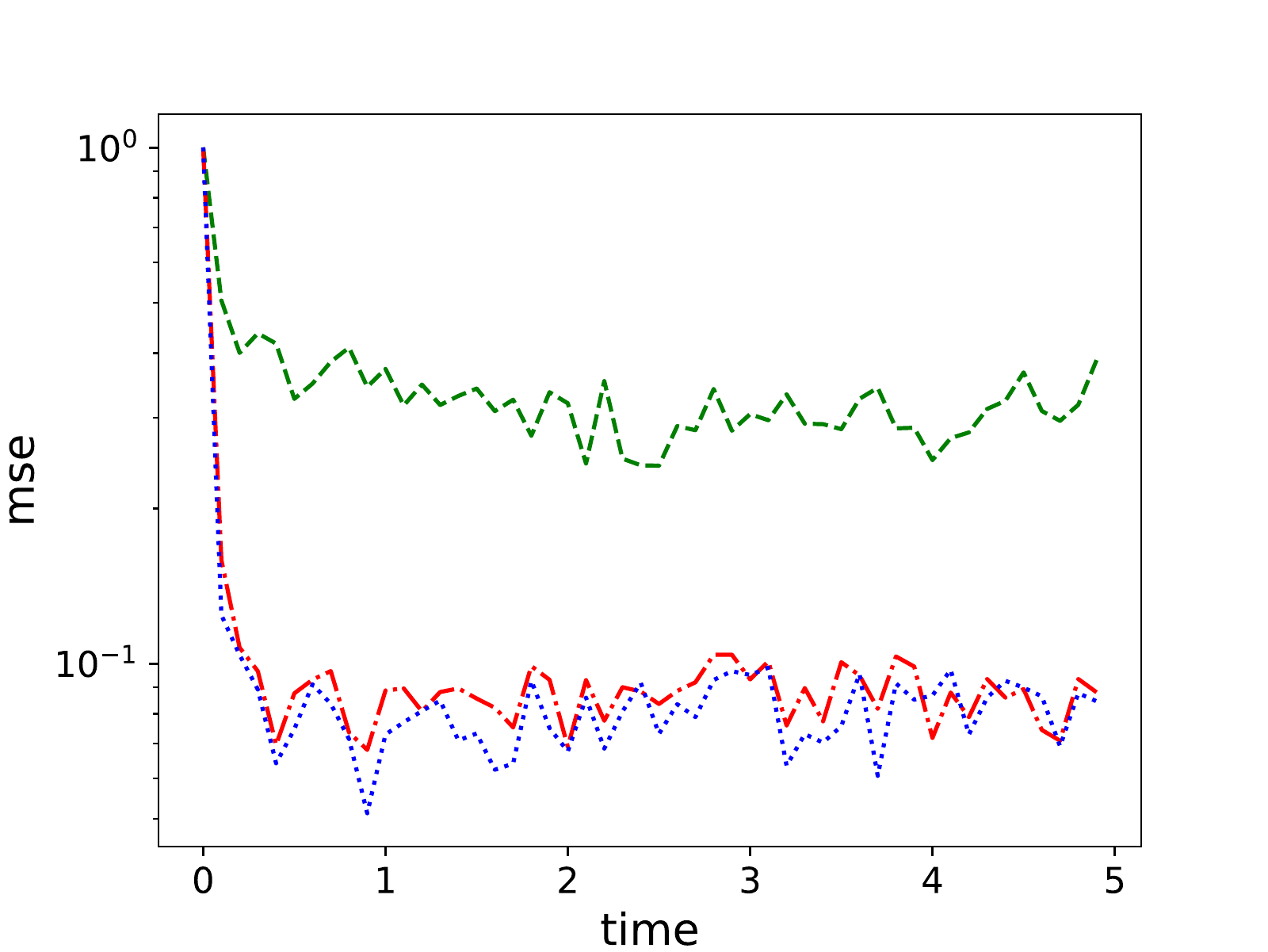}
     }
     \caption{
     Numerical results for the application of filters to the example~\eqref{eq:model-example} in a similar setting as Figure~\ref{fig:true_states}. The figure shows the MSE~\eqref{eq:performance_test} in estimating a function of the state. In panels (a) and (c), the function $\phi(x)=x$ is used, while panel (b)  is for $\phi(x)=\max(0,x)$. The MSE is evaluated by taking the empirical average over $100$ independent simulations.
     }
    \label{fig:mse}
\end{figure*}

In Figure~\ref{fig:mse}-(a), it is observed that both OT and SIR filters yield results that are close 
to the EnKF, which is asymptotically exact for the linear case. However, in Figure~\ref{fig:mse}-(b), the OT method outperforms both EnKF and SIR for the quadratic case, while the difference between OT and SIR is not significant for the cubic case, depicted in Figure~\ref{fig:mse}-(c). 
The performance of the OT filter is expected to improve with further fine-tuning, increasing the iteration number of training, 
and the number of parameters in the neural net, 
at the cost of  higher computational effort.
An appropriate  analysis of the efficiency of the OT method, how it  scales to high-dimensional problems, and its
  application to more realistic data, is the subject of future work.
    
    \section{Discussion}\label{sec:discussion}
In this paper, we presented the OTPF algorithm and provided preliminary theoretical error analysis and numerical results
{that demonstrated the competitive performance of our method in the presence of nonlinear observations and
non-Gaussian states.
We introduced several directions of future research:} the verification of the geometric stability for dynamical systems  e.g. of the form~\eqref{eq:model-example}; error analysis of the optimization gap  in solving the variational problem, both in terms of representation and generalization as discussed in Remark~\ref{rem:error}; error analysis of the interacting particle system without resampling; and extensive numerical experiments and comparison in truly high-dimensional settings. 

    \bibliographystyle{plain}
    \bibliography{TAC-OPT-FPF,references}

\end{document}